\DeclareMathOperator{\Max}{Max}
\DeclareRobustCommand*{\tautequiv}{\Relbar\joinrel\mathrel|\mathrel|\joinrel\Relbar}
\newtheorem{theorem}{Theorem}[section]
\newtheorem{definition}[theorem]{Definition}
\newtheorem{lemma}[theorem]{Lemma}
\newtheorem{proposition}[theorem]{Proposition}
\title{The logic of orthomodular posets of finite height}
\author{Ivan~Chajda and Helmut~L\"anger}
\date{}
\begin{document}
\footnotetext[1]{Support of the research by the Austrian Science Fund (FWF), project I~4579-N, and the Czech Science Foundation (GA\v CR), project 20-09869L, entitled ``The many facets of orthomodularity'', as well as by \"OAD, project CZ~02/2019, and, concerning the first author, by IGA, project P\v rF~2020~014, is gratefully acknowledged.}
\maketitle
\begin{abstract}
Orthomodular posets form an algebraic formalization of the logic of quantum mechanics. The question is how to introduce the connective implication in such a logic. We show that this is possible when the orthomodular poset in question is of finite height. The main point is that the corresponding algebra, called implication orthomodular poset, i.e.\ a poset equipped with a binary operator of implication, corresponds to the original orthomodular poset and this operator is everywhere defined. We present here the complete list of axioms for implication orthomodular posets. This enables us to derive an axiomatization in Gentzen style for the algebraizable logic of orthomodular posets of finite height.
\end{abstract}

{\bf AMS Subject Classification:} 03G12, 03G25, 06A11, 06C15

{\bf Keywords:} Orthomodular poset, connective implication, implication orthomodular po\-set, axiom system, derivation rules, Gentzen style system

\section{Introduction}

Any physical theory determines a class $(\mathcal E,S)$ of event-state systems where $\mathcal E$ contains the events that may occur in this system and $S$ contains the states that such a physical system may assume. In quantum mechanics $\mathcal E$ is usually identified with the set $\mathcal P(\mathbf H)$ of all projection operators of a Hilbert space $\mathbf H$. This set is in one-to-one correspondence with the set of all closed subspaces of $\mathbf H$. Hence $\mathcal P(\mathbf H)$ forms an orthomodular lattice when endowed with intersection and with the following operations defined for all $P,Q\in\mathcal P(\mathbf H)$:
\begin{itemize}
\item $P':=I-P$, where $I$ is the identity operator,
\item $P\vee Q$ is the projection onto the closed subspace generated by the union of the closed subspaces associated with $P$ and $Q$, respectively.
\end{itemize}
The relevance of such an approach is restricted by the fact that $P\vee Q$ need not be defined for all $P,Q\in\mathcal P(\mathbf H)$. However, if $P,Q$ are orthogonal, for short $P\perp Q$, which means $P\leq Q'$, then $P\vee Q$ exists. Hence, instead of orthomodular lattices frequently orthomodular posets are used as an algebraic axiomatization of the logic of quantum mechanics.

From the formal algebraic point of view, an algebraic system can be considered as a logic if it contains a connective implication which enables to derive propositions by means of axioms and rules. The problem of how to introduce a connective implication in an orthomodular poset was successfully solved by the authors in \cite{CLb}. The present paper is devoted to the following two topics:
\begin{itemize}
\item Introduce an assigned implication algebra derived from a given orthomodular poset and determine the correspondence between such an algebra and the original orthomodular poset,
\item establish a Gentzen style axiomatic system for this implication algebra, i.e.\ determine a system of axioms and rules corresponding to the assigned implication algebra.
\end{itemize}
It is worth noticing that for orthomodular lattices the mentioned tasks were solved by the first author and J.~Cirulis in \cite{CC} where implication was sectional orthocomplementation, the so-called Dishkant implication. For orthomodular posets such an approach is not possible since the operation join is not everywhere defined. Hence we restrict ourselves to orthomodular posets of finite height where every non-empty subset has maximal elements. Then implication can be defined not as an operation, but as an operator taking maximal elements of a certain lower cone. Hence the result of implication need not be a single element, but a set of elements formed by using certain maximal elements.

We start with several definitions of concepts concerning posets with a unary operation.

A {\em bounded poset} is an ordered quadruple $\mathbf P=(P,\leq,0,1)$ such that $(P,\leq)$ is a poset, $0,1\in P$ and $0\leq x\leq1$ for all $x\in P$. Let $\mathbf P=(P,\leq,0,1)$ be a bounded poset and $a,b\in P$ and $A,B\subseteq P$. Then $\mathbf P$ is said to be of {\em finite height} if every chain is finite. A unary operation $'$ on $P$ is called {\em antitone} if for $x,y\in P$, $x\leq y$ implies $y'\leq x'$, and it is called an {\em involution} if it satisfies the identity $x''\approx x$. We define
\begin{align*}
L(A) & :=\{x\in P\mid x\leq y\text{ for all }y\in A\}, \\
U(A) & :=\{x\in P\mid y\leq x\text{ for all }y\in A\}.
\end{align*}
Instead of $L(\{a\})$, $L(\{a,b\})$, $L(A\cup B)$ we shortly write $L(a)$, $L(a,b)$, $L(A,B)$. Analogously we proceed in similar cases. It is clear that if $\mathbf P$ is of finite height then every element of $A$ lies below a maximal element of $A$. Let $'$ be an antitone involution on $\mathbf P$. We say that $a$ and $b$ are {\em orthogonal} to each other, shortly $a\perp b$, if $a\leq b'$. By $\Max A$ we denote the set of all maximal elements of $A$. Moreover, by $a\vee A$ we mean the set $\{a\vee x\mid x\in A\}$. By $A\leq a$ we mean $x\leq a$ for all $x\in A$. We put $A':=\{x'\mid x\in A\}$. Finally, De Morgan's laws hold, i.e.\, if $a\vee b$ is defined then so is $a'\wedge b'$ and we have $(a\vee b)'=a'\wedge b'$. Dually, if $a\wedge b$ is defined then so is $a'\vee b'$ and we have $(a\wedge b)'=a'\vee b'$. 

\begin{definition}\label{def1}
An {\em orthomodular poset} is an ordered quintuple $(P,\leq,{}',0,1)$ such that $(P,\leq,0,1)$ is a bounded poset, $'$ is an antitone involution which is a complementation, i.e.\ which satisfies the identities $x\vee x'\approx1$ and $x\wedge x'\approx0$, where in case $x\perp y$ the supremum $x\vee y$ is defined and which satisfies the {\em orthomodular law}
\begin{enumerate}
\item[{\rm(OM)}] $x\leq y$ implies $(y'\vee x)'\vee x=y$.
\end{enumerate}
\end{definition}

The expression in (OM) is well defined. This can be seen as follows: If $x\leq y$ then $y'\leq x'$ and hence $y'\perp x$ which shows that $y'\vee x$ exists. Now $x\leq y'\vee x$ and hence $(y'\vee x)'\leq x'$, i.e.\ $(y'\vee x)'\perp x$ which shows that $(y'\vee x)'\vee x$ exists. Due to De Morgan's laws, (OM) can be written in the form
\[
x\leq y\text{ implies }x\vee(y\wedge x')=y.
\]

\section{Implication in orthomodular posets}

In an orthomodular poset $\mathbf P=(P,\leq,{}',0,1)$ the operations of join and meet are only partial. Since orthomodular posets are considered as logics of quantum mechanics, we need to have the logical connective implication in order to enable deducing propositions in this logic. For this, the implication should be an everywhere defined binary operator on $\mathbf P$. For orthomodular lattices one usually considers the Sasaki implication, see e.g.\ \cite{Be} or \cite{CC}. Here we show that that if $\mathbf P$ is of a finite height then an everywhere defined implication can be introduced also here but it will not be a binary operation on $P$ since the result of the implication need not be a single element, but may be a subset of $P$. On the other hand, this subset will be as small as possible.

\begin{definition}\label{def3}
Let $\mathbf P=(P,\leq,{}',0,1)$ be an orthomodular poset of finite height. We define
\[
x\rightarrow y:=y\vee\Max L(x',y')
\]
for all $x,y\in P$.
\end{definition}

Since $\Max L(x',y')\leq y'$ for all $x,y\in P$, $x\rightarrow y$ is defined for all $x,y\in P$. The expression $x\rightarrow y=1$ can be extended to the case where $x$ and $y$ are substituted by subsets $A$ and $B$ of $P$, respectively, as follows:
\begin{enumerate}
\item[(G)] $A\rightarrow B=1$ if and only if for every $x\in A$ there exists some $y\in B$ with $x\rightarrow y=1$.
\end{enumerate}
In the following we often identify singletons with the unique element they contain.

At first we list several simple but essential properties of this implication.

\begin{proposition}\label{prop1}
Let $(P,\leq,{}',0,1)$ be an orthomodular poset of finite height and $\rightarrow$ be defined as in Definition~\ref{def3}. Then we have the following for all $x,y\in P$:
\begin{enumerate}[{\rm(i)}]
\item $x\rightarrow0\approx x'$,
\item $x\leq y$ is equivalent to $x\rightarrow y=1$,
\item $x\rightarrow x'\approx x'$,
\item if $x\perp y$ then $(x\vee y)'\vee y$ exists and $x\rightarrow y=(x\vee y)'\vee y$,
\item if $x\perp y$ then $x\vee y=(x\rightarrow y)\rightarrow y$,
\item if $x\perp y$ then $x\vee y=x'\rightarrow y$,
\item if $x\geq y$ then $x\rightarrow y=x'\vee y$,
\item if $x\perp y$ then $((x\rightarrow y)\rightarrow y)\rightarrow y=x\rightarrow y$,
\item if $x\leq y$ then $(((y'\rightarrow x)\rightarrow x)'\rightarrow x)\rightarrow x=y$,
\item $(x'\rightarrow x)\rightarrow x\approx1$,
\item $x\rightarrow(y\rightarrow x)\approx1$.
\end{enumerate}
\end{proposition}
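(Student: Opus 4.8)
The plan is to observe first that although $x\rightarrow y=y\vee\Max L(x',y')$ is in general a set, under an orthogonality hypothesis it collapses to a single element, and this is what makes every nested expression in the statement well defined. Indeed, whenever the meet $x'\wedge y'$ exists (equivalently, whenever $x\vee y$ exists) we have $L(x',y')=L(x'\wedge y')$, so $\Max L(x',y')$ is the singleton $\{x'\wedge y'\}=\{(x\vee y)'\}$; this is precisely item (iv), and it is the workhorse for (v) and (viii)--(x). The two recurring tools are De Morgan's laws and the dual form of the orthomodular law, $x\le y\Rightarrow x\vee(y\wedge x')=y$. I would record (iv) at the very start, together with the other ``direct'' identities obtained by simply identifying the relevant lower cone: for (i), $L(x',0')=L(x',1)=L(x')$, whose only maximal element is $x'$; for (iii), $L(x',x'')=L(x',x)=L(x'\wedge x)=L(0)=\{0\}$, so $x\rightarrow x'=x'\vee0=x'$; and for (vii), when $y\le x$ we have $x'\le y'$, hence $L(x',y')=L(x')$ with maximal element $x'$, giving $x\rightarrow y=x'\vee y$.

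Next I would prove the central equivalence (ii). The forward direction is immediate: $x\le y$ gives $y'\le x'$, so $L(x',y')=L(y')$, $\Max L(x',y')=\{y'\}$, and $x\rightarrow y=y\vee y'=1$. The converse is the first genuinely delicate point. Assuming $x\rightarrow y=1$, pick any $z\in\Max L(x',y')$, which is nonempty because $0\in L(x',y')$ and $\mathbf P$ has finite height. Since $z\le y'$ we have $z\perp y$, so $y\vee z$ exists and equals $1$ by hypothesis; De Morgan gives $y'\wedge z'=(y\vee z)'=0$, and then the dual orthomodular law applied to $z\le y'$ yields $z=z\vee(y'\wedge z')=y'$. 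Hence $y'\in L(x',y')$, so $y'\le x'$ and $x\le y$. With (ii) and (vii) in hand, items (v) and (vi) fall out quickly: for (v), set $a:=x\vee y$; by (iv) $x\rightarrow y=a'\vee y\ge y$, so (vii) gives $(x\rightarrow y)\rightarrow y=(a'\vee y)'\vee y=(a\wedge y')\vee y$, which equals $a=x\vee y$ by the dual orthomodular law applied to $y\le a$; and (vi) is just (vii) applied to $x'\ge y$.

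Finally I would treat the nested identities by repeatedly peeling off implications with (iv), (v) and (vii). For (viii), (v) turns $((x\rightarrow y)\rightarrow y)\rightarrow y$ into $(x\vee y)\rightarrow y$, which by (vii) is $(x\vee y)'\vee y$, and that is $x\rightarrow y$ by (iv). For (x), $x'\perp x$ holds trivially, so (v) gives $(x'\rightarrow x)\rightarrow x=x'\vee x=1$; and (xi) is even softer, since every element of $y\rightarrow x=x\vee\Max L(y',x')$ lies above $x$, so $x\rightarrow z=1$ for each such $z$ and rule (G) applies. The heaviest computation is (ix): using $x\le y$ (so $y'\perp x$) and (v), the inner expression $(y'\rightarrow x)\rightarrow x$ becomes $y'\vee x$, its complement is $y\wedge x'$ by De Morgan, and since $y\wedge x'\le x'$ another application of (v) gives $((y\wedge x')\rightarrow x)\rightarrow x=(y\wedge x')\vee x$, which is $y$ by the dual orthomodular law. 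The main obstacle throughout is bookkeeping: checking at each step that the required joins and meets actually exist (via the orthogonality hypotheses and De Morgan) so that the intermediate implications are single elements rather than genuine sets. The only step needing a real idea rather than unfolding is the converse of (ii), where orthomodularity is used to force a maximal element of $L(x',y')$ to coincide with $y'$.
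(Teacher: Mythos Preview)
Your proposal is correct and follows essentially the same route as the paper: unfolding the definition via $\Max L(x',y')$, using (iv) as the workhorse under orthogonality, and invoking the (dual) orthomodular law for the converse of (ii) and for (v), (ix). The only cosmetic difference is that you prove (vii) directly from the definition and then derive (vi) from it, whereas the paper does the reverse; this is immaterial.
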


\begin{proof}
\
\begin{enumerate}[(i)]
\item $x\rightarrow0\approx0\vee\Max L(x',1)\approx0\vee x'\approx x'$
\item If $x\leq y$ then $x\rightarrow y=y\vee\Max L(x',y')=y\vee\Max L(y')=y\vee y'=1$. Conversely, if $x\rightarrow y=1$ and $z\in\Max L(x',y')$ then because of $y\vee\Max L(x',y')=1$ we have $y\vee z=1$ and, since $z\leq y'$, we conclude
\[
y'=(y\vee z)'\vee z=z\in\Max L(x',y')\subseteq L(x',y')\subseteq L(x')
\]
whence $y'\leq x'$, i.e.\ $x\leq y$.
\item $x\rightarrow x'\approx x'\vee\Max L(x',x)\approx x'\vee0\approx x'$
\item If $x\perp y$ then $x\vee y$ is defined and hence so is $x'\wedge y'$ whence
\[
x\rightarrow y=y\vee\Max L(x',y')=y\vee\Max L(x'\wedge y')=y\vee(x'\wedge y')=(x\vee y)'\vee y.
\]
\item If $x\perp y$ then, using (iv), we get
\begin{align*}
x\vee y & =((x\vee y)'\vee y)'\vee y=y\vee((x\vee y)\wedge y')=y\vee\Max L((x\vee y)\wedge y')= \\
        & =y\vee\Max L((x\vee y)\wedge y',y')=y\vee\Max L((x\rightarrow y)',y')=(x\rightarrow y)\rightarrow y.
\end{align*}
\item If $x\perp y$ then $x\vee y=y\vee x=y\vee\Max L(x)=y\vee\Max L(x,y')=x'\rightarrow y$.
\item If $x\geq y$ then $x'\perp y$ and hence $x\rightarrow y=x''\rightarrow y=x'\vee y$ by (vi).
\item If $x\perp y$ then, by (v), (vii) and (iv),
\[
((x\rightarrow y)\rightarrow y)\rightarrow y=(x\vee y)\rightarrow y=(x\vee y)'\vee y=x\rightarrow y.
\]
\item If $x\leq y$ then
\[
(((y'\rightarrow x)\rightarrow x)'\rightarrow x)\rightarrow x=(y'\vee x)'\vee x=y
\]
according to (v) and orthomodularity.
\item According to (v), $(x'\rightarrow x)\rightarrow x\approx x'\vee x\approx1$.
\item This follows from $x\leq y\rightarrow x$.
\end{enumerate}
\end{proof}

Now we introduce our main concept.

\begin{definition}\label{def2}
An {\em implication orthomodular poset} is an ordered triple $(I,\rightarrow,0)$ such that $\rightarrow$ is a mapping from $I^2$ to $2^I\setminus\{\emptyset\}$ and $0\in I$ satisfying the following conditions for $x,y,z\in I$ {\rm(}$x'$ is an abbreviation for $x\rightarrow0$ and $1$ an abbreviation for $0'$ and we use convention {\rm(G))}:
\begin{enumerate}[{\rm(O1)}]
\item $0\rightarrow x\approx x\rightarrow x\approx1$,
\item if $x\rightarrow y=y\rightarrow x=1$ then $x=y$,
\item if $x\rightarrow y=y\rightarrow z=1$ then $x\rightarrow z=1$,
\item $x''\approx x$,
\item if $x\rightarrow y=1$ then $(y\rightarrow z)\rightarrow(x\rightarrow z)=1$,
\item if $x\rightarrow y=1$ then $(((y'\rightarrow x)\rightarrow x)'\rightarrow x)\rightarrow x=y$,
\item if $x\rightarrow y'=1$ then $x\rightarrow((x\rightarrow y)\rightarrow y)=y\rightarrow((x\rightarrow y)\rightarrow y)=1$,
\item if $x\rightarrow y'=x\rightarrow z=y\rightarrow z=1$ then $((x\rightarrow y)\rightarrow y)\rightarrow z=1$,
\item $(x'\rightarrow x)\rightarrow x\approx1$,
\item $x\rightarrow(y\rightarrow x)\approx1$.
\end{enumerate}
\end{definition}

As mentioned above, the results of this operator $\rightarrow$ need not be singletons, but may be subsets containing more than one element. However, we show that $a\rightarrow0$ is a singleton for all $a\in I$.

\begin{lemma}\label{lem2}
Let $(I,\rightarrow,0)$ be an implication orthomodular poset and $a\in I$. Then $a'$ is a singleton.
\end{lemma}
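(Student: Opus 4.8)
The plan is to exploit the double-negation axiom (O4) together with the set-valued extension of orthocomplementation. Recall that for a subset $A\subseteq I$ the orthocomplement is formed elementwise, $A'=A\rightarrow0=\bigcup_{x\in A}x'$, in accordance with the convention $A'=\{x'\mid x\in A\}$ from the introduction (each $x'=x\rightarrow0$ being a non-empty member of $2^I\setminus\{\emptyset\}$). This reading is exactly what is needed to give meaning to the iterated complement $x''$ occurring in (O4). With it in hand, applying (O4) to the element $a$ yields $a''=a$; expanding the outer complement gives $\bigcup_{p\in a'}p'=\{a\}$.

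The first key step is then purely set-theoretic: a union of non-empty sets that equals the singleton $\{a\}$ can arise only if every term of the union is contained in $\{a\}$, and since each $p'$ is non-empty this forces $p'=\{a\}$ for every $p\in a'$. The crucial observation is that (O4), read on the subset $a'$, does not merely constrain the overall union but in fact pins down the complement of each individual element of $a'$ to one and the same singleton $\{a\}$.

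To finish I would fix an arbitrary $p\in a'$ and compute its double complement in two ways. On the one hand $p'=\{a\}$, so $p''=\{a\}'=a\rightarrow0=a'$; on the other hand (O4) applied to the element $p$ gives $p''=\{p\}$. Comparing the two expressions yields $a'=\{p\}$, so $a'$ is a singleton as claimed. I expect the main obstacle to be not any deep combinatorics but the careful bookkeeping of the set extension: one must be sure that $x''$ in (O4) is interpreted through the elementwise complement $\bigcup_{x\in A}x'$ rather than through convention (G) (which governs only assertions of the form $A\rightarrow B=1$), and that the non-emptiness clause of the codomain $2^I\setminus\{\emptyset\}$ is invoked at precisely the point where the union is collapsed to a single element.
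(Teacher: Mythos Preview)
Your proof is correct and follows essentially the same route as the paper: pick $b\in a'$, use $a''=\{a\}$ together with non-emptiness to conclude $b'=\{a\}$, then apply (O4) to $b$ to obtain $a'=b''=\{b\}$. The only cosmetic difference is that the paper cites (G) alongside (O4) for the step $b'\subseteq a''$, whereas you justify it via the elementwise extension of the complement and explicitly observe that (G), as stated, governs only assertions of the form $A\rightarrow B=1$; your bookkeeping here is in fact cleaner than the paper's.
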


\begin{proof}
If $b\in a'$ then $b'\subseteq a''=a$ because of (G) and (O4) whence $b'=a$ which implies $a'=b''=b$ again by (O4). This shows that $a'$ is a singleton.
\end{proof}

Let us note that the name {\em implication orthomodular poset} is not misleading since if one defines $x\leq y$ if $x\rightarrow y=1$, $x':=x\rightarrow0$ then $(I,\leq,{}',0,0')$ is in fact an orthomodular poset (see Theorem~\ref{th2} below).

We say that an implication orthomodular poset $(I,\rightarrow,0)$ is of {\em finite height} if there does not exist an infinite sequence $a_1,a_2,a_3,\ldots$ of pairwise different elements of $I$ satisfying $a_n\rightarrow a_{n+1}=1$ for all $n\geq1$.

The next theorem shows that implication orthomodular posets arise in a natural and expected way from orthomodular posets.

\begin{theorem}\label{th1}
Let $\mathbf P=(P,\leq,{}',0,1)$ be an orthomodular poset of finite height and put
\[
x\rightarrow y:=y\vee\Max L(x',y')
\]
for all $x,y\in P$. Then $\mathbb I(\mathbf P):=(P,\rightarrow,0)$ is an implication orthomodular poset of finite height.
\end{theorem}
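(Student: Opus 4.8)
The plan is to check, one by one, that $(P,\rightarrow,0)$ satisfies the ten axioms (O1)--(O10) of Definition~\ref{def2} together with the finite-height condition, and the engine driving almost everything is part~(ii) of Proposition~\ref{prop1}, namely that $x\rightarrow y=1$ is equivalent to $x\leq y$. This equivalence lets me translate each operator-level assertion into an order-theoretic statement in $\mathbf P$, while convention~(G) is used to read $u\rightarrow B=1$ as ``$u$ lies below some element of the set $B$''. Since the result of $\rightarrow$ is in general a set, the recurring subtlety is to check that the relevant iterated expressions actually collapse to singletons (via parts (iv)--(v) of Proposition~\ref{prop1}) before applying~(ii).

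I would first dispose of the finite-height condition and the ``order'' axioms. If $a_1,a_2,\ldots$ satisfies $a_n\rightarrow a_{n+1}=1$, then by~(ii) it is a chain $a_1\leq a_2\leq\cdots$; pairwise distinctness makes it strictly increasing and infinite, contradicting the finite height of $\mathbf P$. Axioms (O1), (O2), (O3) are respectively reflexivity, antisymmetry, and transitivity of $\leq$ read through~(ii). Axiom (O4), $x''\approx x$, holds because part~(i) gives $x\rightarrow0=x'$, so the operator-defined complement coincides with the orthocomplement of $\mathbf P$, whose involution yields $x''=x$. Axioms (O6), (O9), (O10) are literally parts (ix), (x), (xi) of Proposition~\ref{prop1}, once the hypothesis $x\rightarrow y=1$ is converted to $x\leq y$ by~(ii).

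Next come the axioms governed by orthogonality. In (O7) and (O8) the hypothesis $x\rightarrow y'=1$ means $x\leq y'$, i.e.\ $x\perp y$, so by part~(v) the iterated expression $(x\rightarrow y)\rightarrow y$ is the \emph{singleton} $x\vee y$. Then (O7) reduces to the trivial inequalities $x\leq x\vee y$ and $y\leq x\vee y$, each giving $1$ via~(ii); and (O8), under the further hypotheses $x\leq z$ and $y\leq z$, reduces to $x\vee y\leq z$, which holds because $z$ is an upper bound of $\{x,y\}$ and $x\vee y$ is their least upper bound, so $((x\rightarrow y)\rightarrow y)\rightarrow z=(x\vee y)\rightarrow z=1$.

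The main obstacle is (O5), the only axiom with no direct counterpart in Proposition~\ref{prop1}. The hypothesis $x\rightarrow y=1$ gives $x\leq y$, hence $y'\leq x'$ and therefore $L(y',z')\subseteq L(x',z')$. I must show $(y\rightarrow z)\rightarrow(x\rightarrow z)=1$, which by~(G) means that every element $u$ of $y\rightarrow z=z\vee\Max L(y',z')$ lies below some element of $x\rightarrow z=z\vee\Max L(x',z')$. Writing $u=z\vee w$ with $w\in\Max L(y',z')$, I note $w\in L(x',z')$, so by finite height $w\leq w^*$ for some $w^*\in\Max L(x',z')$. Since $w^*\leq z'$ the join $z\vee w^*$ exists and belongs to $x\rightarrow z$, and as $z\vee w^*$ is an upper bound of $\{z,w\}$ while $z\vee w$ is their least upper bound we get $u=z\vee w\leq z\vee w^*$, whence $u\rightarrow(z\vee w^*)=1$ by~(ii). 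The delicate points here are to justify the existence of the partial suprema from orthogonality to $z$ and to invoke monotonicity of the partial join through the least-upper-bound property rather than a lattice identity; once this is done, (O5) follows and the verification is complete.
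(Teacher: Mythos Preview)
Your proposal is correct and follows essentially the same route as the paper: both translate each axiom via Proposition~\ref{prop1}(ii) into an order-theoretic statement, invoke parts (i), (v), (ix)--(xi) of Proposition~\ref{prop1} for (O4), (O6)--(O10), and handle (O5) by the identical argument of lifting $w\in\Max L(y',z')\subseteq L(x',z')$ to some maximal $w^*\in\Max L(x',z')$ and comparing the resulting joins. Your treatment is in fact a bit more explicit than the paper's about the existence of the partial suprema and the use of convention~(G), but the structure and content of the argument coincide.
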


\begin{proof}
Throughout the proof we use (i), (ii), (v) and (ix) of Proposition~\ref{prop1} and $0'=1$.
\begin{enumerate}
\item[(O1)] -- (O3) follow from the fact that $(P,\leq,0,1)$ is a bounded poset.
\item[(O4)] follows from the fact that $'$ is an involution.
\item[(O5)] Assume $x\rightarrow y=1$. Then, by (ii) of Proposition~\ref{prop1}, $x\leq y$. Let $u\in y\rightarrow z$. Then there exists some $w\in\Max L(y',z')$ with $u=z\vee w$. Since $y'\leq x'$, we have $w\in L(x',z')$. Hence there exists some $t\in\Max L(x',z')$ with $w\leq t$. Now $u=z\vee w\leq z\vee t\in x\rightarrow z$. This shows $(y\rightarrow z)\rightarrow(x\rightarrow z)=1$.
\item[(O6)] If $x\rightarrow y=1$ then, by (ii) of Proposition~\ref{prop1}, $x\leq y$ and hence by (ix) of Proposition~\ref{prop1}, $(((y'\rightarrow x)\rightarrow x)'\rightarrow x)\rightarrow x=y$.
\item[(O7)] If $x\rightarrow y'=1$ then, by (ii) of Proposition~\ref{prop1}, $x\leq y'$, i.e.\ $x\perp y$ and hence $(x\rightarrow y)\rightarrow y=x\vee y$ by (v) whence $x,y\leq(x\rightarrow y)\rightarrow y$, i.e.\ $x\rightarrow((x\rightarrow y)\rightarrow y)=y\rightarrow((x\rightarrow y)\rightarrow y)=1$.
\item[(O8)] If $x\rightarrow y'=x\rightarrow z=y\rightarrow z=1$ then $x,y\leq z$ and $x\leq y'$ by (ii) of Proposition~\ref{prop1}, i.e.\ $x\perp y$, and hence $(x\rightarrow y)\rightarrow y=x\vee y$ by (v) of Proposition~\ref{prop1} whence $(x\rightarrow y)\rightarrow y\leq z$, i.e.\ $((x\rightarrow y)\rightarrow y)\rightarrow z=1$.
\item[(O9)] Since $x'\leq x'$ we have $x'\perp x$ and, by (v) of Proposition~\ref{prop1}, $(x'\rightarrow x)\rightarrow x\approx x'\vee x\approx1$.
\item[(O10)] This follows from $y\rightarrow x\geq x$.
\end{enumerate}
Of course, $\mathbb I(\mathbf P)$ is of finite height.
\end{proof}

If $\mathbf P$ is an orthomodular poset of a finite height then $\mathbb I(\mathbf P)$ will be referred to as {\em the implication orthomodular poset assigned to $\mathbf P$}.

That in fact an implication orthomodular poset can be considered as a poset is shown by the next theorem.

\begin{theorem}\label{th2}
Let $\mathbf I=(I,\rightarrow,0)$ be an implication orthomodular poset of finite height and put
\begin{align*}
x\leq y & :\Leftrightarrow x\rightarrow y=1, \\
     x' & :=x\rightarrow0, \\
      1 & :=0'
\end{align*}
{\rm(}$x,y\in I${\rm)}. Then $\mathbb P(\mathbf I):=(I,\leq,{}',0,1)$ is an orthomodular poset of finite height.
\end{theorem}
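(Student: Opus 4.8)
The plan is to verify the clauses of Definition~\ref{def1} for $\mathbb P(\mathbf I)$ one at a time, translating each axiom (O1)--(O10) into the intended order-theoretic statement. First I would show that $\leq$ is a partial order: reflexivity $x\leq x$ is the identity $x\rightarrow x=1$ from (O1), antisymmetry is precisely (O2), and transitivity is precisely (O3). The other half of (O1), namely $0\rightarrow x=1$, says $0$ is the least element. By Lemma~\ref{lem2} the assignment $x\mapsto x'=x\rightarrow0$ is genuinely single-valued, and (O4) says it is an involution. Instantiating (O5) at $z=0$ gives ``$x\leq y$ implies $y'\rightarrow x'=1$'', i.e.\ $y'\leq x'$, so $'$ is antitone. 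Since an antitone involution is an order-reversing bijection, $x'\leq0'=1$ for every $x$, and surjectivity of $'$ then forces $z\leq1$ for all $z$; hence $1=0'$ is the greatest element. This establishes that $(I,\leq,{}',0,1)$ is a bounded poset equipped with an antitone involution.

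Next I would reconstruct, inside $\mathbb P(\mathbf I)$, the abstract analogue of the pivotal identity of Proposition~\ref{prop1}(v): if $a\perp b$ then $a\vee b$ exists and equals $(a\rightarrow b)\rightarrow b$. Given $a\perp b$ (i.e.\ $a\leq b'$, which is the hypothesis ``$a\rightarrow b'=1$'' of (O7) and (O8)), axiom (O7) yields that $(a\rightarrow b)\rightarrow b$ dominates both $a$ and $b$, so it is an upper bound of $\{a,b\}$, while (O8) yields that $(a\rightarrow b)\rightarrow b$ sits below every common upper bound $z$ of $a$ and $b$; together these identify $(a\rightarrow b)\rightarrow b$ as the least upper bound $a\vee b$. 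The complementation laws then follow. Since $x\perp x'$ (because $x\leq x''=x$), axiom (O9) gives $x\vee x'=(x'\rightarrow x)\rightarrow x=1$; dualizing through the De Morgan behaviour of the antitone involution yields the existence of $x\wedge x'$ together with $x\wedge x'=(x'\vee x)'=1'=0$, using $1'=0''=0$.

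For the orthomodular law I would read off (O6) directly. If $x\leq y$ then $y'\leq x'$, so $y'\perp x$; applying the join identity of the previous step twice --- once to $y'\perp x$ to rewrite $(y'\rightarrow x)\rightarrow x$ as $y'\vee x$, and once to $(y'\vee x)'\perp x$ (which holds since $x\leq y'\vee x$) to collapse the outer double implication --- turns the left-hand side of (O6) into $(y'\vee x)'\vee x$. Thus (O6) reads ``$x\leq y$ implies $(y'\vee x)'\vee x=y$'', which is exactly (OM). Finite height transfers immediately: by definition $a_n\rightarrow a_{n+1}=1$ means $a_n\leq a_{n+1}$, so an infinite strictly ascending sequence of distinct elements would be a forbidden sequence, and since $'$ is an order-reversing bijection, excluding infinite ascending sequences also excludes infinite descending ones, whence every chain is finite.

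The step I expect to be the genuine obstacle is the second paragraph. Because $\rightarrow$ is set-valued and the relation ``$A\rightarrow B=1$'' compares $A$ and $B$ only existentially via convention (G), I must take care that $(a\rightarrow b)\rightarrow b$ really designates a single element and a bona fide least upper bound in $I$, rather than merely a set each of whose members is dominated by the common upper bounds of $a$ and $b$. Reconciling the set-valued operator with the requirement of Definition~\ref{def1} that $a\vee b$ be an \emph{actual} element --- via Lemma~\ref{lem2}, the involution (O4), and careful bookkeeping with (G), (O7) and (O8) --- is where the real work lies; the remaining clauses are essentially a transcription of Proposition~\ref{prop1} into the abstract axioms.
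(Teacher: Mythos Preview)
Your proposal is correct and follows essentially the same route as the paper: derive the bounded-poset structure from (O1)--(O3), use Lemma~\ref{lem2} and (O4)--(O5) for the antitone involution, combine (O7)--(O8) to recover Proposition~\ref{prop1}(v) (that $(x\rightarrow y)\rightarrow y=x\vee y$ when $x\perp y$), apply (O9) plus De~Morgan for complementation, and read orthomodularity off (O6) via two applications of the join identity. One small deviation: the paper obtains $x\leq 1$ directly from (O10) and (O1) via $x\rightarrow 1=x\rightarrow(0\rightarrow x)=1$, whereas you deduce it from $0\leq x$ together with the fact that $'$ is an antitone bijection; both are fine, but note that your version never invokes (O10). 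Your closing worry about the set-valued operator and convention~(G) is legitimate --- one really does need to argue that (O7) and (O8) together pin down a single element serving as $x\vee y$ --- and the paper, like you, leaves this bookkeeping implicit.
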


\begin{proof}
$(I,\leq,0,1)$ is a bounded poset because of (O1) -- (O3) and (O10), $'$ is a unary operation on $I$ because of Lemma~\ref{lem2}, $'$ is antitone because of (O5) and $'$ is an involution because of (O4). ($x\leq1$ follows from $x\rightarrow1\approx x\rightarrow(0\rightarrow x)\approx1$ by (O1) and (O10).) Assume $x\perp y$. Then (O7) says that $(x\rightarrow y)\rightarrow y$ is an upper bound of $x$ and $y$, and (O8) says that every upper bound of $x$ and $y$ is greater than or equal to $(x\rightarrow y)\rightarrow y$. Together, we obtain (v) of Proposition~\ref{prop1}. Since $x'\leq x'$ we have $x'\perp x$ which implies $x'\vee x\approx(x'\rightarrow x)\rightarrow x\approx1$ by (v) of Proposition~\ref{prop1} and (O9). Since $'$ is an antitone involution on the bounded poset $(I,\leq,0,1)$ we conclude $x\wedge x'\approx x''\wedge x'\approx(x'\vee x)'\approx1'\approx0$ by De Morgan's laws. This shows that $'$ is a complementation. Finally, if $x\leq y$ then $x\rightarrow y=1$ and because of (v) of Proposition~\ref{prop1} we have $(y'\rightarrow x)\rightarrow x=y'\vee x$ and again by (v) of Proposition~\ref{prop1} and (O6) we conclude
\[
(y'\vee x)'\vee x=((y'\vee x)'\rightarrow x)\rightarrow x=(((y'\rightarrow x)\rightarrow x)'\rightarrow x)\rightarrow x=y
\]
proving orthomodularity. Of course $\mathbb P(\mathbf I)$ is of finite height.
\end{proof}

If $\mathbf I$ is an implication orthomodular poset of finite height then $\mathbb P(\mathbf I)$ will be referred to as {\em the orthomodular poset assigned to $\mathbf I$}.

Consider the following condition for implication orthomodular posets $(I,\rightarrow,0)$:
\begin{enumerate}
\item[(C)] $x\rightarrow y=\{(y\rightarrow u)\rightarrow u\mid u\in I, u\rightarrow x'=u\rightarrow y'=1, u=w\text{ for all }w\in I\text{ with }u\rightarrow w=w\rightarrow x'=w\rightarrow y'=1\}$
\end{enumerate}
($x,y\in I$). If $\mathbf P$ is an orthomodular poset of finite height then $\mathbb I(\mathbf P)=(P,\rightarrow,0)$ satisfies (C) because of the definition of $\rightarrow$, in fact it means
\[
x\rightarrow y=y\vee\Max L(x',y')
\]
for all $x,y\in P$.

To an orthomodular poset $\mathbf P$ of finite we may assign the corresponding implication orthomodular poset $\mathbb I(\mathbf P)$ and to this the corresponding orthomodular poset $\mathbb P(\mathbb I(\mathbf P))$. The question if $\mathbb P(\mathbb I(\mathbf P))=\mathbf P$ is answered in the following theorem. On the contrary, when starting with an implication orthomodular poset $\mathbf I$ of finite height, it turns out that $\mathbb I(\mathbb P(\mathbf I))=\mathbf I$ holds only in a special case.

\begin{theorem}
We have
\begin{enumerate}[{\rm(i)}]
\item $\mathbb P(\mathbb I(\mathbf P))=\mathbf P$ for every orthomodular poset $\mathbf P$ of finite height,
\item $\mathbb I(\mathbb P(\mathbf I))=\mathbf I$ for every implication orthomodular poset $\mathbf I$ of finite height satisfying {\rm(C)}.
\end{enumerate}
\end{theorem}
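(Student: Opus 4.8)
The plan is to treat the two round-trip identities separately, since (i) is essentially a bookkeeping argument built on Proposition~\ref{prop1}, while (ii) rests on correctly decoding condition (C). For (i) I would start from $\mathbf P=(P,\leq,{}',0,1)$, form $\mathbb I(\mathbf P)=(P,\rightarrow,0)$, and then build $\mathbb P(\mathbb I(\mathbf P))=(P,\leq_1,{}^{\ast},0,1_1)$ as prescribed in Theorem~\ref{th2}. Each datum is recovered via Proposition~\ref{prop1}: the relation $x\leq_1 y$ means $x\rightarrow y=1$, which by Proposition~\ref{prop1}(ii) is equivalent to $x\leq y$, so $\leq_1$ coincides with $\leq$; the complementation satisfies $x^{\ast}=x\rightarrow0=x'$ by Proposition~\ref{prop1}(i), so ${}^{\ast}$ coincides with ${}'$; and the top element is $1_1=0^{\ast}=0\rightarrow0=0'=1$, again by Proposition~\ref{prop1}(i). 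Hence all five data of $\mathbb P(\mathbb I(\mathbf P))$ agree with those of $\mathbf P$, giving $\mathbb P(\mathbb I(\mathbf P))=\mathbf P$.

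For (ii) I would first invoke Theorem~\ref{th2} to know that $\mathbb P(\mathbf I)=(I,\leq,{}',0,1)$ is an orthomodular poset of finite height, where $x\leq y$ abbreviates $x\rightarrow y=1$ and $x'=x\rightarrow0$. By Definition~\ref{def3}, the reconstructed implication $\rightarrow_1$ of $\mathbb I(\mathbb P(\mathbf I))$ is $x\rightarrow_1 y=y\vee\Max L(x',y')$, with $L$, $\Max$ and $\vee$ computed inside $\mathbb P(\mathbf I)$. The heart of the proof is to recognise that condition (C) is exactly the abstract transcription of this formula. Reading (C) in $\mathbb P(\mathbf I)$, the constraints $u\rightarrow x'=u\rightarrow y'=1$ say $u\leq x'$ and $u\leq y'$, i.e.\ $u\in L(x',y')$; the clause ``$u=w$ for all $w$ with $u\rightarrow w=w\rightarrow x'=w\rightarrow y'=1$'' says that every $w\in L(x',y')$ lying above $u$ equals $u$, i.e.\ $u\in\Max L(x',y')$. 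For such a $u$ we have $u\leq y'$, hence $u\perp y$ and, by symmetry of orthogonality, $y\perp u$, so Proposition~\ref{prop1}(v)---which holds for $\mathbf I$ via (O7) and (O8), as established in the proof of Theorem~\ref{th2}---yields $(y\rightarrow u)\rightarrow u=y\vee u$. Consequently the right-hand side of (C) equals $\{y\vee u\mid u\in\Max L(x',y')\}=y\vee\Max L(x',y')=x\rightarrow_1 y$, and (C) reads $x\rightarrow y=x\rightarrow_1 y$ for all $x,y\in I$, which is precisely $\mathbb I(\mathbb P(\mathbf I))=\mathbf I$.

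The main obstacle is this faithful translation of (C) in part (ii): one must verify that the first-order conditions on $u$ and $w$ really encode membership in $L(x',y')$ together with maximality there, and that the term $(y\rightarrow u)\rightarrow u$ collapses to the poset join $y\vee u$ through Proposition~\ref{prop1}(v). Finite height enters tacitly to guarantee that $L(x',y')$ has maximal elements, so that the reconstructed set $\Max L(x',y')$ is nonempty and $\rightarrow_1$ is everywhere defined; this is exactly why (ii) must be restricted to implication orthomodular posets of finite height. Part (i), by contrast, needs no such care, reducing immediately to parts (i) and (ii) of Proposition~\ref{prop1}.
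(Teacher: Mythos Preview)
Your proposal is correct and follows essentially the same route as the paper: part~(i) is recovered from Proposition~\ref{prop1}(i) and~(ii) exactly as you do, and part~(ii) is obtained by reading condition~(C) as the statement $x\rightarrow y=y\vee\Max L(x',y')$ together with Proposition~\ref{prop1}(v). The paper's proof of~(ii) is terser---it simply writes $c\leadsto d=d\vee\Max L(c',d')=c\rightarrow d$ ``because of (C) and (v) of Proposition~\ref{prop1}''---relying on the remark just before the theorem that~(C) is precisely the transcription of the defining formula for~$\rightarrow$; your explicit unpacking of the maximality clause and your observation that $(y\rightarrow u)\rightarrow u=y\vee u$ holds in~$\mathbf I$ via (O7)--(O8) (as shown in the proof of Theorem~\ref{th2}) make this step transparent rather than adding anything new.
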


\begin{proof}
\
\begin{enumerate}[(i)]
\item Let
\begin{align*}
                      \mathbf P & =(P,\leq,{}',0,1)\text{ be an orthomodular poset of finite height}, \\
           \mathbb I(\mathbf P) & =(P,\rightarrow,0)\text{ its assigned implication orthomodular poset}, \\
\mathbb P(\mathbb I(\mathbf P)) & =(P,\sqsubseteq,^*,0,\bar1)
\end{align*}
and $a,b\in P$. Then $a^*=a\rightarrow0=a'$ because of (i), $\bar1=0^*=0'=1$ and the following are equivalent: $a\sqsubseteq b$; $a\rightarrow b=\bar1$; $a\rightarrow b=1$; $a\leq b$. This shows $\mathbb P(\mathbb I(\mathbf P))=\mathbf P$.
\item Let
\begin{align*}
                      \mathbf I & =(I,\rightarrow,0)\text{ be an implication orthomodular poset of finite height satisfying} \\
											          & \hspace*{5.5 mm}\text{(C)}, \\
           \mathbb P(\mathbf I) & =(I,\leq,{}',0,1)\text{ its assigned orthomodular poset}, \\
\mathbb I(\mathbb P(\mathbf I)) & =(I,\leadsto,0)
\end{align*}
and $c,d\in I$. Then $c\leadsto d=d\vee\Max L(c',d')=c\rightarrow d$ because of (C) and (v) of Proposition~\ref{prop1}.
\end{enumerate}
\end{proof}

\section{Deductive axiom system}

One way how to capture the logic of orthomodular posets is to construct an appropriate system of axioms and derivation rules, i.e.\ the so-called deductive system.

In the following we establish a so-called Gentzen axiom system for the logic of orthomodular posets of finite height, i.e.\ we algebraically axiomatize these posets by means of implication orthomodular posets. For this purpose, we recall some concepts taken from \cite{BP}.

For a class $\mathcal K$ of $\mathcal L$-algebras over a language $\mathcal L$, consider the relation $\models_\mathcal K$ that holds between a set $\Sigma$ of identities and a single identity $\varphi\approx\psi$ if every interpretation of $\varphi\approx\psi$ in a member of $\mathcal K$ holds provided each identity in $\Sigma$ holds under the same interpretation. In this case we say that $\varphi\approx\psi$ is a {\em $\mathcal K$-consequence} of $\Sigma$. The relation $\models_\mathcal K$ is called the {\em semantic equational consequence relation} determined by $\mathcal K$.

Given a deductive system $(\mathcal L,\vdash_L)$ over a language $\mathcal L$ with $\mathcal Fm$ denoting the class of its formulas, a class $\mathcal K$ of $\mathcal L$-algebras is called an {\em algebraic semantics} for $(\mathcal L,\vdash_L)$ if $\vdash_L$ can be interpreted in $\models_\mathcal K$ in the following sense: There exists a finite system $\delta_i(p)\approx\varepsilon_i(p)$, ($\delta(\varphi)\approx\varepsilon(\varphi)$, in brief) of identities with a single variable $p$ such that for all $\Gamma\cup\{\varphi\}\subseteq\mathcal Fm$,
\[
\Gamma\vdash_L\varphi\Leftrightarrow\{\delta(\varphi)\approx\varepsilon(\varphi),\psi\in\Gamma\}\models_\mathcal K\delta(\varphi)\approx\varepsilon(\varphi).
\]
Then $\delta_i\approx\varepsilon_i$ are called {\em defining identities} for $(\mathcal L,\vdash_L)$ and $\mathcal K$.

$\mathcal K$ is said to be {\em equivalent} to $(\mathcal L,\vdash_L)$ if there exists a finite system $\Delta_j(p,q)$ of formulas with two variables $p,q$ such that for every identity $\varphi\approx\psi$,
\[
\varphi\approx\psi\tautequiv_\mathcal K\delta(\varphi\Delta\psi)\approx\varepsilon(\varphi\Delta\psi),
\]
where $\varphi\Delta\psi$ means just $\Delta(\varphi,\psi)$ and $\Gamma\tautequiv_\mathcal K\Delta$ is an abbreviation for the conjunction $\Gamma\models_\mathcal K\Delta$ and $\Delta\models_\mathcal K\Gamma$. 

According to \cite R and \cite{RS}, a {\em standard system of implicative extensional propositional calculus} (SIC, for short) is a deductive system $(\mathcal L,\vdash_L)$ satisfying the following conditions:
\begin{itemize}
\item The language $\mathcal L$ contains a finite number of connectives of rank $0$, $1$ and $  2$ and none of higher rank,
\item $\mathcal L$ contains a binary connective $\rightarrow$ for which the following theorems and derived inference rules hold:
\begin{align*}
& \vdash\varphi\rightarrow\varphi, \\
& \varphi,\varphi\rightarrow\psi\vdash\psi, \\
& \varphi\rightarrow\psi,\psi\rightarrow\chi\vdash\varphi\rightarrow\chi, \\
& \varphi\rightarrow\psi,\psi\rightarrow\varphi\vdash P(\varphi)\rightarrow P(\psi)\text{ for every unary }P\in\mathcal L, \\
& \varphi\rightarrow\psi,\psi\rightarrow\varphi,\chi\rightarrow\lambda,\lambda\rightarrow\chi\vdash Q(\varphi,\chi)\rightarrow Q(\psi,\lambda)\text{ for every binary }Q\in\mathcal L.
\end{align*}
\end{itemize}
Since orthomodular posets are only partial algebras, it could be a problem to find appropriate algebraic semantics formulated by means of these partial operations. However, we have shown in Theorems~\ref{th1} and \ref{th2} that an orthomodular poset can equivalently be expressed as an implication orthomodular poset having only one binary operator $\rightarrow$ (cf.\ the symbol $Q$ used previously).

By the propositional logic $L_{{\rm OMP}}$ in a language $\mathcal L=\{\rightarrow,0\}$ ($\rightarrow$ is of rank 2, $0$ is of rank $0$) we understand a consequence relation $\vdash_{{\rm OMP}}$ (or $\vdash$, in brief) satisfying the axioms
\begin{enumerate}[(B1)]
\item $\vdash\varphi\rightarrow(\psi\rightarrow\varphi)$,
\item $\vdash\varphi\rightarrow\varphi$,
\item $\vdash\neg\neg\varphi\approx\varphi$,
\item $\vdash0\rightarrow\varphi$,
\item $\vdash(\neg\varphi\rightarrow\varphi)\rightarrow\varphi$
\end{enumerate}
and the rules
\begin{enumerate}
\item[(MP)] $\varphi,\varphi\rightarrow\psi\vdash\psi$,
\item[(Sf)] $\varphi\rightarrow\psi\vdash(\psi\rightarrow\chi)\rightarrow(\varphi\rightarrow\chi)$,
\item[(R1)] $\varphi\rightarrow\psi,\psi\rightarrow\varphi\vdash\varphi\approx\psi$,
\item[(R2)] $\varphi\rightarrow\psi\vdash(\neg((\neg\psi\rightarrow\varphi)\rightarrow\varphi)\rightarrow\varphi)\rightarrow\varphi\approx\psi$,
\item[(R3)] $\varphi\rightarrow\neg\psi\vdash\varphi\rightarrow((\varphi\rightarrow\psi)\rightarrow\psi)$,
\item[(R4)] $\varphi\rightarrow\neg\psi\vdash\psi\rightarrow((\varphi\rightarrow\psi)\rightarrow\psi)$,
\item[(R5)] $\varphi\rightarrow\neg\psi,\varphi\rightarrow\chi,\psi\rightarrow\chi\vdash((\varphi\rightarrow\psi)\rightarrow\psi)\rightarrow\chi$
\end{enumerate}
where $\neg\varphi:=\varphi\rightarrow0$.

Moreover, taking $\varepsilon(p)=p$, $\delta(p)=p\rightarrow p$ and $\Delta(p,q)=\{p\rightarrow q,q\rightarrow p\}$, it is known (see \cite{BP}) that every SIC has an algebraic semantics with the defining identity $\delta\approx\varepsilon$ and with the set $\Delta$ as an equivalence system. As a consequence we obtain

The logic $(\mathcal L,\vdash_{{\rm OMP}})$ is algebraizable with equivalence formulas $\Delta=\{p\rightarrow q,q\rightarrow p\}$ and the defining identity $p\approx p\rightarrow p$. This will be proved in details below.

In order to show that the system is really an axiom system for orthomodular posets in Gentzen style, we prove the following important properties.

\begin{lemma}\label{lem1}
In the propositional logic $L_{{\rm OMP}}$ the following are provable:
\begin{enumerate}[{\rm(i)}]
\item $\varphi\rightarrow\psi,\psi\rightarrow\chi\vdash\varphi\rightarrow\chi$,
\item $\vdash\varphi\rightarrow(0\rightarrow0)$.
\end{enumerate}
\end{lemma}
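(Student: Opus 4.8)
The plan is to derive both statements purely formally from the axioms (B1)--(B5) and the rules (MP) and (Sf), with no appeal to the algebraic semantics of Theorems~\ref{th1} and \ref{th2}.

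For (i), I would read the claim as transitivity of provable implication and obtain it in two steps. Starting from the premise $\varphi\rightarrow\psi$, the suffixing rule (Sf) yields $(\psi\rightarrow\chi)\rightarrow(\varphi\rightarrow\chi)$. Taking this together with the second premise $\psi\rightarrow\chi$ and applying (MP) produces $\varphi\rightarrow\chi$, as required. No axiom beyond these two rules is needed, since (Sf) already supplies exactly the conditional whose antecedent is the second premise.

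For (ii), I would first record the instance $\vdash0\rightarrow0$ of (B2) (substituting $0$ for $\varphi$), and then weaken it by an arbitrary antecedent $\varphi$. The weakening uses the instance of (B1) obtained by substituting $0\rightarrow0$ for the first variable and $\varphi$ for the second, namely $\vdash(0\rightarrow0)\rightarrow(\varphi\rightarrow(0\rightarrow0))$. Applying (MP) to $\vdash0\rightarrow0$ and this conditional delivers $\vdash\varphi\rightarrow(0\rightarrow0)$. This is just the familiar ``derive a theorem, then prepend an arbitrary hypothesis'' pattern realized by (B1) and (MP).

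Both derivations are short, so I do not anticipate a genuine obstacle; the only care needed is the bookkeeping of substitution instances. In particular one should confirm that $0\rightarrow0$ is a legitimate formula of the language $\mathcal{L}=\{\rightarrow,0\}$, so that the instances of (B1) and (B2) used above are admissible, and that the application of (Sf) in (i) is to the first premise while the application of (MP) consumes the second.
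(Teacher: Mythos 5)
Your proposal is correct and follows essentially the same derivation as the paper: (Sf) followed by (MP) for part (i), and the instance $\vdash0\rightarrow0$ of (B2) weakened via (B1) and (MP) for part (ii). (The paper notes that $\vdash0\rightarrow0$ can alternatively be obtained from (B4); otherwise the two arguments coincide step for step.)
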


\begin{proof}
\
\begin{enumerate}[(i)]
\item We have
\[
\varphi\rightarrow\psi\vdash(\psi\rightarrow\chi)\rightarrow(\varphi\rightarrow\chi)
\]
by (Sf) and
\[
\psi\rightarrow\chi,(\psi\rightarrow\chi)\rightarrow(\varphi\rightarrow\chi)\vdash\varphi\rightarrow\chi
\]
by (MP).
\item We have
\[
\vdash0\rightarrow0,
\]
by (B2) or (B4),
\[
\vdash(0\rightarrow0)\rightarrow(\varphi\rightarrow(0\rightarrow0))
\]
by (B1) and
\[
0\rightarrow0,(0\rightarrow0)\rightarrow(\varphi\rightarrow(0\rightarrow0))\vdash\varphi\rightarrow(0\rightarrow0)
\]
by (MP).
\end{enumerate}
\end{proof}

In our terminology, $0\rightarrow0$ is $\neg0$ and hence it will be considered as an algebraic constant $1$. Then $0$ has the meaning of the logical value FALSE and $1$ the meaning of its opposite, i.e.\ the logical value TRUE.

In order to show that our system is an equivalent algebraic semantics for $(\mathcal L,\vdash_{{\rm OMP}})$ we use the following statement (see \cite{BP}, Theorem~2.17).

\begin{proposition}
Let $(\mathcal L,\vdash_L)$ be a deductive system given by a set of axioms {\rm Ax} and a set of inference rules {\rm Ir}. Assume $(\mathcal L,\vdash_L)$ is algebraizable with equivalence formulas $\Delta$ and defining identities $\delta\approx\varepsilon$. Then the unique equivalent semantics for $(\mathcal L,\vdash_L)$ is axiomatized by the identities
\begin{itemize}
\item $\delta(\varphi)\approx\varepsilon(\varphi)$ for each $\varphi\in{\rm Ax}$,
\item $\delta(p\Delta p)\approx\varepsilon(p\Delta p)$
\end{itemize}
together with the quasiidentities
\begin{itemize}
\item $\delta(\psi_0)\approx\varepsilon(\psi_0)\wedge\cdots\wedge\delta(\psi_{n-1})\approx\varepsilon(\psi_{n-1})\Rightarrow\delta(\varphi)\approx\varepsilon(\varphi)$ for each $\psi_0,\ldots,\psi_{n-1}\vdash\varphi\in{\rm Ir}$,
\item $\delta(p\Delta q)\approx\varepsilon(p\Delta q)\Rightarrow p\approx q$.
\end{itemize}
\end{proposition}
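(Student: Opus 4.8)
The plan is to let $\mathcal Q$ denote the quasivariety axiomatized by the two families of identities and the two families of quasi-identities listed in the statement, and to prove $\mathcal Q=\mathcal K$, where $\mathcal K$ is the (unique) equivalent algebraic semantics of $(\mathcal L,\vdash_L)$. I would split this into a \emph{soundness} inclusion $\mathcal K\subseteq\mathcal Q$ and a \emph{completeness} inclusion $\mathcal Q\subseteq\mathcal K$; since both classes are quasivarieties, the latter is equivalent to showing that every quasi-identity valid in $\mathcal K$ already follows from the listed axioms. Throughout I would use only the two defining features of algebraizability recorded above, namely the consequence-transfer equivalence
\[
\Gamma\vdash_L\varphi\Longleftrightarrow\{\delta(\psi)\approx\varepsilon(\psi)\mid\psi\in\Gamma\}\models_{\mathcal K}\delta(\varphi)\approx\varepsilon(\varphi)
\]
and the equivalence of an identity with the translation of its $\Delta$-image,
\[
s\approx t\ \tautequiv_{\mathcal K}\ \delta(s\Delta t)\approx\varepsilon(s\Delta t),
\]
together with the fact that $\vdash_L$ is presented by $\mathrm{Ax}$ and $\mathrm{Ir}$.

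For soundness I would merely specialise these two facts. Each axiom $\varphi\in\mathrm{Ax}$ satisfies $\vdash_L\varphi$, so the transfer equivalence with $\Gamma=\emptyset$ gives $\models_{\mathcal K}\delta(\varphi)\approx\varepsilon(\varphi)$, which is the first family of identities. Applying the equivalence $\tautequiv_{\mathcal K}$ to the trivially valid $p\approx p$ yields the reflexivity identity $\delta(p\Delta p)\approx\varepsilon(p\Delta p)$. Each rule $\psi_0,\dots,\psi_{n-1}\vdash\varphi$ of $\mathrm{Ir}$ is exactly the left-hand side of the transfer equivalence with $\Gamma=\{\psi_0,\dots,\psi_{n-1}\}$, so it produces the quasi-identity $\bigwedge_i\bigl(\delta(\psi_i)\approx\varepsilon(\psi_i)\bigr)\Rightarrow\delta(\varphi)\approx\varepsilon(\varphi)$. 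Finally the injectivity quasi-identity $\delta(p\Delta q)\approx\varepsilon(p\Delta q)\Rightarrow p\approx q$ is just the right-to-left half of the equivalence condition. Hence every member of $\mathcal K$ satisfies all listed sentences, i.e.\ $\mathcal K\subseteq\mathcal Q$.

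The completeness inclusion is the substantial part, and I expect it to be the main obstacle. Writing $\rho(s\approx t):=s\Delta t$ for the translation of equations into sets of formulas, the strategy is to transport a derivation across the translations. Given a quasi-identity $\Theta\models_{\mathcal K}\mu\approx\nu$ valid in $\mathcal K$, the equivalence condition lets me replace each equation by the translation of its $\Delta$-image, and the transfer equivalence read from right to left then yields the purely logical statement $\rho[\Theta]\vdash_L\chi$ for every $\chi\in\rho(\mu\approx\nu)$. Because $\vdash_L$ is presented by $\mathrm{Ax}$ and $\mathrm{Ir}$, each such $\chi$ has a finite derivation from $\rho[\Theta]$ whose leaves are hypotheses or axiom instances and whose nodes are rule applications. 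Translating this derivation back through $\delta,\varepsilon$ turns every axiom instance into an instance of a listed identity and every rule application into an instance of a listed quasi-identity, giving $\{\delta(\xi)\approx\varepsilon(\xi)\mid\xi\in\rho[\Theta]\}\models_{\mathcal Q}\delta(\chi)\approx\varepsilon(\chi)$ for each such $\chi$. It remains to reconnect these translations with the original equations inside $\mathcal Q$: the reflexivity identity and the injectivity quasi-identity in the list reproduce, within $\mathcal Q$, the equivalence $s\approx t\ \tautequiv_{\mathcal Q}\ \delta(s\Delta t)\approx\varepsilon(s\Delta t)$ (the injectivity quasi-identity gives one direction, while the reflexivity identity together with replacement of equals gives the other). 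Combining these equivalences with the translated derivation yields $\Theta\models_{\mathcal Q}\mu\approx\nu$, so every quasi-identity valid in $\mathcal K$ holds in $\mathcal Q$, whence $\mathcal Q\subseteq\mathcal K$ and $\mathcal Q=\mathcal K$.

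The delicate point on which the completeness argument rests is the \emph{faithfulness} of this back-and-forth passage: one has to verify the two round-trip laws $\varphi\ \dashv\vdash_L\ \rho\bigl(\delta(\varphi)\approx\varepsilon(\varphi)\bigr)$ and $s\approx t\ \tautequiv_{\mathcal K}\ \delta(\rho(s\approx t))\approx\varepsilon(\rho(s\approx t))$, which express that $\delta\approx\varepsilon$ and $\Delta$ are mutually inverse modulo the two consequence relations. These invertibility laws are exactly what is packaged into the hypothesis of algebraizability, and it is their interaction with the substitution-invariance and finitarity of $\vdash_L$ and $\models_{\mathcal K}$ that allows a finite derivation to be carried intact from the propositional calculus to the quasi-equational calculus and back. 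Once faithfulness is in hand, uniqueness of the equivalent algebraic semantics identifies $\mathcal Q$ with $\mathcal K$ and completes the proof.
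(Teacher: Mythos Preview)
The paper does not actually prove this proposition: it is quoted verbatim as Theorem~2.17 of Blok and Pigozzi \cite{BP} and used as a black box, so there is no ``paper's own proof'' to compare against. Your outline is essentially the standard Blok--Pigozzi argument for that theorem: verify that the listed (quasi-)identities hold in the equivalent algebraic semantics $\mathcal K$ (soundness), and conversely transport a $\vdash_L$-derivation through the translations $\tau(\varphi)=\{\delta(\varphi)\approx\varepsilon(\varphi)\}$ and $\rho(s\approx t)=s\Delta t$ to show that every $\mathcal K$-valid quasi-identity is already derivable from the listed axioms (completeness).

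One point worth sharpening: in the completeness direction you invoke the round-trip law $\varphi\;\dashv\vdash_L\;\delta(\varphi)\Delta\varepsilon(\varphi)$, but the paper (following \cite{BP}) records only the two conditions $\Gamma\vdash_L\varphi\Leftrightarrow\tau[\Gamma]\models_{\mathcal K}\tau(\varphi)$ and $s\approx t\;\tautequiv_{\mathcal K}\;\tau(\rho(s\approx t))$. The logical round-trip you need is \emph{derivable} from these two together with structurality of $\vdash_L$, and in Blok--Pigozzi this is proved separately (their Corollary~2.9) before Theorem~2.17. If you intend a self-contained proof you should insert that derivation; otherwise you are tacitly using another cited result. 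Apart from this, your sketch is correct and matches the source the paper cites.
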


At first, we prove that the logic $L_{{\rm OMP}}$ is axiomatizable in the sense of \cite{BP}. 

It is evident that our system $(\mathcal L,\vdash_{{\rm OMP}})$ satisfies the aforementioned properties and hence it is a SIC.

\begin{theorem}\label{th3}
The logic $L_{{\rm OMP}}$ is algebraizable.
\end{theorem}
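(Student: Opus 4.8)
The plan is to prove algebraizability by checking the Blok--Pigozzi criterion directly for the equivalence formulas $\Delta(p,q)=\{p\rightarrow q,\,q\rightarrow p\}$ and the defining identity $\delta(p)\approx\varepsilon(p)$, i.e.\ $p\rightarrow p\approx p$. By that criterion it suffices to show (1)~$\vdash\Delta(\varphi,\varphi)$, (2)~$\Delta(\varphi,\psi)\vdash\Delta(\psi,\varphi)$, (3)~$\Delta(\varphi,\psi),\Delta(\psi,\chi)\vdash\Delta(\varphi,\chi)$, (4)~$\Delta(\varphi,\psi),\Delta(\chi,\lambda)\vdash\Delta(c(\varphi,\chi),c(\psi,\lambda))$ for each connective $c$, and (5)~the interderivability $\varphi\dashv\vdash\Delta(\varphi\rightarrow\varphi,\varphi)$. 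Equivalently, since we have already noted that $(\mathcal L,\vdash_{{\rm OMP}})$ is a SIC, one may simply invoke the result recalled from \cite{BP} (see also \cite{R,RS}) that every SIC is algebraizable with exactly these $\Delta$ and $\delta\approx\varepsilon$; the verification below is what makes this precise.

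Conditions (1)--(3) are immediate: (1) is axiom (B2); (2) holds because $\Delta$ is a symmetric set of formulas; and (3) is Lemma~\ref{lem1}(i) applied to each of the two implications. The only connectives of $\mathcal L$ are the nullary $0$, for which (4) reduces to reflexivity, and the binary $\rightarrow$, so the substantive part of (4) is the congruence of $\rightarrow$. For (5) I would argue as follows: $\varphi\rightarrow(\varphi\rightarrow\varphi)$ is the instance of (B1) obtained by taking $\psi:=\varphi$; from the hypothesis $\varphi$ and the instance $\varphi\rightarrow((\varphi\rightarrow\varphi)\rightarrow\varphi)$ of (B1), (MP) yields $(\varphi\rightarrow\varphi)\rightarrow\varphi$, so $\varphi\vdash\Delta(\varphi\rightarrow\varphi,\varphi)$; conversely, from $(\varphi\rightarrow\varphi)\rightarrow\varphi$ together with the theorem $\varphi\rightarrow\varphi$ of (B2), (MP) gives $\varphi$.

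The main obstacle is the congruence of $\rightarrow$, namely deriving $(\varphi\rightarrow\chi)\rightarrow(\psi\rightarrow\lambda)$ and its converse from $\Delta(\varphi,\psi)\cup\Delta(\chi,\lambda)$. Here one must genuinely use both members of each pair, since $\rightarrow$ is not monotone in a single argument: for incomparable atoms $a,b$ with $a'$ incomparable to $b$ one has $a\rightarrow0=a'$ and $a\rightarrow b=b$, so $0\leq b$ does not entail $a\rightarrow0\leq a\rightarrow b$. For the antecedent position I would use (Sf): from $\psi\rightarrow\varphi$ one gets $(\varphi\rightarrow\chi)\rightarrow(\psi\rightarrow\chi)$. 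For the consequent position I would pass through rule (R1): from $\chi\rightarrow\lambda$ and $\lambda\rightarrow\chi$ one derives the identity $\chi\approx\lambda$, and replacing $\chi$ by $\lambda$ turns $(\psi\rightarrow\chi)\rightarrow(\psi\rightarrow\lambda)$ into an instance of (B2). Chaining the two steps by transitivity (Lemma~\ref{lem1}(i)) yields $(\varphi\rightarrow\chi)\rightarrow(\psi\rightarrow\lambda)$, and the converse follows symmetrically from $\varphi\rightarrow\psi$ and $\lambda\rightarrow\chi$. With (1)--(5) in hand the Blok--Pigozzi theorem gives that $L_{{\rm OMP}}$ is algebraizable with the stated equivalence formulas and defining identity; its equivalent algebraic semantics is then the class of implication orthomodular posets by Theorems~\ref{th1} and \ref{th2}.
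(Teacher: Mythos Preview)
Your proof is correct and follows essentially the same route as the paper: both verify the Blok--Pigozzi criterion (Theorem~4.7 of \cite{BP}) for $\Delta=\{p\rightarrow q,\,q\rightarrow p\}$ and the defining identity $p\rightarrow p\approx p$, and both handle the key congruence condition for $\rightarrow$ by using (Sf) in the first argument and (R1) followed by an instance of (B2) in the second. The only cosmetic differences are that the paper splits your condition~(4) into separate one-variable pieces (iv-1)/(iv-2), and that the paper phrases the interderivability (your~(5)) with $0\rightarrow0$ in place of $\varphi\rightarrow\varphi$; your version matches the stated defining identity more literally. Your closing sentence, that the equivalent algebraic semantics is the class of implication orthomodular posets ``by Theorems~\ref{th1} and~\ref{th2}'', overshoots what those theorems show; that identification is handled separately in the paper via the Proposition from \cite{BP} and the two theorems following this one.
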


\begin{proof}
By Theorem~4.7 of \cite{BP}, it suffices to prove the following statements for all formulas $\varphi,\psi,\chi$ in $L_{{\rm OMP}}$:
\begin{enumerate}
\item[(i)] $\vdash\varphi\rightarrow\varphi$,
\item[(ii)] $\varphi\rightarrow\psi\vdash\varphi\rightarrow\psi$,
\item[(iii)] $\varphi\rightarrow\psi,\psi\rightarrow\chi\vdash\varphi\rightarrow\chi$,
\item[(iv-1)] $\varphi\rightarrow\psi\vdash(\psi\rightarrow\chi)\rightarrow(\varphi\rightarrow\chi)$,
\item[(iv-2)] $\varphi\rightarrow\psi,\psi\rightarrow\varphi\vdash(\chi\rightarrow\varphi)\rightarrow(\chi\rightarrow\psi)$,
\item[(v-1)] $\varphi\vdash\varphi\rightarrow(0\rightarrow0)$,
\item[(v-2)] $\varphi\vdash(0\rightarrow0)\rightarrow\varphi$,
\item[(v-3)] $(0\rightarrow0)\rightarrow\varphi\vdash\varphi$.
\end{enumerate}
Now we prove these conditions.
\begin{enumerate}
\item[(i)] This is just (B2).
\item[(ii)] We have
\[
\vdash(\varphi\rightarrow\psi)\rightarrow(\varphi\rightarrow\psi)
\]
by (B2) and
\[
\varphi\rightarrow\psi,(\varphi\rightarrow\psi)\rightarrow(\varphi\rightarrow\psi)\vdash\varphi\rightarrow\psi
\]
by (MP).
\item[(iii)] This is just (i) of Lemma~\ref{lem1}.
\item[(iv-1)] This is just (Sf).
\item[(iv-2)] We have
\[
\varphi\rightarrow\psi,\psi\rightarrow\varphi\vdash\varphi\approx\psi
\]
by (R1) and
\[
\vdash(\chi\rightarrow\varphi)\rightarrow(\chi\rightarrow\varphi)
\]
by (B2).
\item[(v-1)] This follows from (ii) of Lemma~\ref{lem1}.
\item[(v-2)] We have
\[
\vdash\varphi\rightarrow((0\rightarrow0)\rightarrow\varphi)
\]
by (B1) and
\[
\varphi,\varphi\rightarrow((0\rightarrow0)\rightarrow\varphi)\vdash(0\rightarrow0)\rightarrow\varphi
\]
by (MP).
\item[(v-3)] We have
\[
\vdash0\rightarrow0
\]
by (B2) or (B4) and
\[
0\rightarrow0,(0\rightarrow 0)\rightarrow\varphi\vdash\varphi
\]
by (MP).
\end{enumerate}
\end{proof}

Now we show that our Gentzen system is in fact valid in the class of implication orthomodular posets. For this purpose, we only compare the given axioms and rules of $L_{{\rm OMP}}$ with the axioms (O1) -- (O9) of Definition~\ref{def2}.

\begin{theorem}
Axioms {\rm(B1)} -- {\rm(B5)} and rules {\rm(MP)}, {\rm(Sf)} and {\rm(R1)} -- {\rm(R5)} are valid in every implication orthomodular poset.
\end{theorem}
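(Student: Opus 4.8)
The plan is to read the propositional calculus $(\mathcal L,\vdash_{\rm OMP})$ directly into the algebraic signature of Definition~\ref{def2}. I interpret each propositional variable as an element of an implication orthomodular poset $\mathbf I=(I,\rightarrow,0)$, evaluate compound formulas with the operator $\rightarrow$ together with the abbreviations $\neg\varphi=\varphi\rightarrow0$ (which is $x'$, a singleton by Lemma~\ref{lem2}) and $0\rightarrow0=0'=1$, and read every occurrence of ``$=1$'' through convention (G). Under this dictionary a theorem $\vdash\varphi$ is valid exactly when the value of $\varphi$ equals $1$; a rule $\psi_0,\dots,\psi_{n-1}\vdash\chi$ with formula conclusion is valid exactly when all values $\psi_i=1$ force $\chi=1$; and a rule with equational conclusion $\varphi\approx\psi$ is valid exactly when those premises force the values of $\varphi$ and $\psi$ to coincide. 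The point is that the axioms (O1)--(O10) were written in precisely the same nested-implication expressions, so once the dictionary is fixed the verification is a direct matching.

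I would first dispatch the axioms. (B1) is (O10), namely $x\rightarrow(y\rightarrow x)=1$; (B2) is the instance $x\rightarrow x=1$ of (O1); (B3) is (O4), since $\neg\neg\varphi$ evaluates to $x''$; (B4) is the instance $0\rightarrow x=1$ of (O1); and (B5) is (O9), because $\neg\varphi$ is $x'$. For the rules whose hypotheses match an O-axiom verbatim: (Sf) is (O5); (R1) is (O2); (R2) is (O6), after unwinding $\neg((\neg\psi\rightarrow\varphi)\rightarrow\varphi)$ to $((y'\rightarrow x)\rightarrow x)'$ so that its conclusion becomes $(((y'\rightarrow x)\rightarrow x)'\rightarrow x)\rightarrow x=y$; (R3) and (R4) are the two conclusions of (O7) under the premise $x\rightarrow y'=1$; and (R5) is (O8). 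In each case the premises $\varphi\rightarrow\psi=1$, $\varphi\rightarrow\neg\psi=1$, etc., coincide with the corresponding hypotheses of the O-axiom.

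The one rule that is not immediately an O-axiom is (MP), which I would argue separately. If the values $x$ of $\varphi$ and $x\rightarrow y$ of $\varphi\rightarrow\psi$ are both $1$, then substituting $x=1$ gives $1\rightarrow y=1$, i.e.\ $1\le y$ in the assigned poset $\mathbb P(\mathbf I)$. Since $1=0'$ is the greatest element of $\mathbb P(\mathbf I)$ (established in Theorem~\ref{th2} from (O1) and (O10)), we also have $y\le1$, and antisymmetry (O2) yields $y=1$, the value of $\psi$. Hence (MP) is valid as well.

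I do not expect a genuine obstacle: the content is bookkeeping rather than a new computation. The only points requiring care are, first, keeping the abbreviations $\neg\varphi=\varphi\rightarrow0$ and $1=0'$ consistent, so that the unary-connective expressions in (B3), (B5) and (R2) really unwind to the primed expressions in (O4), (O9) and (O6); and second, remembering that $\rightarrow$ is set-valued, so that every displayed ``$=1$'' is to be read through (G) exactly as in Definition~\ref{def2}. Both are purely notational once the dictionary is in place, and the proof then reduces to citing the matching axiom in each of the twelve cases.
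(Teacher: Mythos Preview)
Your proposal is correct and matches the paper's own proof almost line for line: each of (B1)--(B5), (Sf), and (R1)--(R5) is dispatched by citing the corresponding axiom among (O1)--(O10), exactly as you indicate. The only deviation is in (MP), where the paper instead invokes (O6) with $x=1$ (after first noting $z\rightarrow1=1$ from (O1) and (O10)) to compute $y=(((y'\rightarrow1)\rightarrow1)'\rightarrow1)\rightarrow1=1$; your route via $1\le y$, $y\le1$, and antisymmetry (O2) is a harmless variant of the same idea.
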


\begin{proof}
\
\begin{enumerate}
\item[(B1)] follows from (O10).
\item[(B2)] follows from (O1).
\item[(B3)] follows from (O4).
\item[(B4)] follows from (O1).
\item[(B5)] follows from (O9).
\item[(MP)] We have $x\rightarrow1\approx x\rightarrow(0\rightarrow x)\approx1$ by (O1) and (O10). Hence, if $x=x\rightarrow y=1$ then $y=(((y'\rightarrow1)\rightarrow1)'\rightarrow1)\rightarrow1=1$ by (O6).
\item[(Sf)] follows from (O5).
\item[(R1)] follows from (O2).
\item[(R2)] follows from (O6).
\item[(R3)] and (R4) follow from (O7).
\item[(R5)] follows from (O8).
\end{enumerate}
\end{proof}

Finally, we prove that also conversely, the axiom system $L_{{\rm OMP}}$ in fact induces the class of implication orthomodular posets. This shows that this system is a proper axiom system in Gentzen style for orthomodular posets of finite height.

\begin{theorem}
Using axioms {\rm(B1)} -- {\rm(B5)} and rules {\rm(MP)}, {\rm(Sf)} and {\rm(R1)} -- {\rm(R5)}, we can derive {\rm(O1)} -- {\rm(O10)}.
\end{theorem}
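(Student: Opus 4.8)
The plan is to invert the matching carried out in the preceding theorem: there each of (B1)--(B5), (MP), (Sf), (R1)--(R5) was shown to follow from one of (O1)--(O10), and here I would run the same dictionary in the opposite direction, deriving each (O) from the corresponding logical axiom or rule. The translation is the algebraic reading of the calculus guaranteed by the algebraizability (Theorem~\ref{th3}), with defining identity $p\approx p\rightarrow p$ and equivalence formulas $\{p\rightarrow q,\,q\rightarrow p\}$: a provable formula $\vdash\varphi$ becomes the identity $\varphi\approx1$ (with $1:=0\rightarrow0$), a rule $\varphi_1,\dots,\varphi_n\vdash\psi$ becomes the quasi-identity ``$\varphi_1\approx1,\dots,\varphi_n\approx1$ imply $\psi\approx1$'', and a rule whose conclusion is an identity $\alpha\approx\beta$ becomes ``$\varphi_1\approx1,\dots,\varphi_n\approx1$ imply $\alpha\approx\beta$''. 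Since $\neg\varphi=\varphi\rightarrow0=\varphi'$, the logical $\neg$ is the algebraic $'$, and $x\rightarrow y=1$ is read throughout with convention (G).

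First I would treat the four axioms of ``$\approx1$'' type. Axiom (O10) is the translation of (B1); the two conjuncts of (O1), namely $x\rightarrow x\approx1$ and $0\rightarrow x\approx1$, are the translations of (B2) and (B4); axiom (O4), $x''\approx x$, is (B3), which is already posed as the identity $\neg\neg\varphi\approx\varphi$; and (O9) is the translation of (B5). Next I would match the rule-induced quasi-identities: (O5) is the translation of (Sf), (O2) of (R1), (O6) of (R2), (O7) of the pair (R3),(R4), and (O8) of (R5). In each case the only thing to check is that, after substituting $'$ for $\neg$ and reading each turnstile as ``$=1$'' (respectively, reading the asserted identity for (R1) and (R2)), the translated premises and conclusion coincide verbatim with the hypotheses and conclusion of the displayed (O).

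The one axiom not arising from a single primitive rule is (O3), the transitivity of $x\rightarrow y=1$. For it I would quote Lemma~\ref{lem1}(i), the derived rule $\varphi\rightarrow\psi,\psi\rightarrow\chi\vdash\varphi\rightarrow\chi$, obtained earlier from (Sf) and (MP); its translation is exactly (O3). This is also the only place where (MP) enters, namely through that derived rule.

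The main obstacle is not any individual computation---each line is a verbatim match---but pinning down the translation dictionary precisely. The delicate point is that the calculus treats $\rightarrow$ as an ordinary binary connective, whereas in an implication orthomodular poset $\rightarrow$ is set-valued; every compound formula must therefore be reinterpreted as a subset of $I$ and every turnstile as ``$=1$'' in the sense of (G), with singletons identified with their elements and $'$ applied to a set read as $\{x'\mid x\in A\}$. The care required is to confirm that this reinterpretation carries the premises and conclusion of each axiom and rule onto exactly the hypotheses and conclusion of the matching (O), so that (O1)--(O10) are recovered with none missing and nothing spurious added.
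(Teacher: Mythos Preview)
Your proposal is correct and matches the paper's own proof essentially line for line: the paper derives (O1) from (B2) and (B4), (O2) from (R1), (O3) from (i) of Lemma~\ref{lem1} (via item (iii) in the proof of Theorem~\ref{th3}), (O4) from (B3), (O5) from (Sf), (O6) from (R2), (O7) from (R3) and (R4), (O8) from (R5), (O9) from (B5), and (O10) from (B1). Your additional remarks on the translation dictionary and the set-valued reading of $\rightarrow$ are sound and more explicit than the paper, but the underlying argument is the same.
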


\begin{proof}
\
\begin{enumerate}[(O1)]
\item follows from (B2) and (B4).
\item follows from (R1).
\item follows from (iii) of the proof of Theorem~\ref{th3}.
\item follows from (B3).
\item follows from (Sf).
\item follows from (R2).
\item follows from (R3) and (R4).
\item follows from (R5).
\item follows from (B5).
\item follows from (B1).
\end{enumerate}
\end{proof}

\section{Conclusion}

We have shown that there can be introduced a connective implication in a natural way also in an orthomodular poset $\mathbf P$ under the condition that $\mathbf P$ is of a finite height, i.e.\ if every chain in $\mathbf P$ is finite. Such an implication can be characterized by ten simple axioms. The resulting algebra called an implication orthomodular poset can be easily constructed from the original orthomodular poset $\mathbf P$ and, conversely, this orthomodular poset $\mathbf P$ can be obtained back from its assigned implication orthomodular poset, in other words, the last can be considered as an algebraic representation of $\mathbf P$. We derive a logical system of five simple axioms and seven derivation rules in Gentzen style which characterize implication orthomodular posets and which is algebraizable in the sense of Blok and Pigozzi. Hence, it justifies to call this deductive system a logic of orthomodular posets.

Authors' addresses:

Ivan Chajda \\
Palack\'y University Olomouc \\
Faculty of Science \\
Department of Algebra and Geometry \\
17.\ listopadu 12 \\
771 46 Olomouc \\
Czech Republic \\
ivan.chajda@upol.cz

Helmut L\"anger \\
TU Wien \\
Faculty of Mathematics and Geoinformation \\
Institute of Discrete Mathematics and Geometry \\
Wiedner Hauptstra\ss e 8-10 \\
1040 Vienna \\
Austria, and \\
Palack\'y University Olomouc \\
Faculty of Science \\
Department of Algebra and Geometry \\
17.\ listopadu 12 \\
771 46 Olomouc \\
Czech Republic \\
helmut.laenger@tuwien.ac.at

\begin{thebibliography}{99}
\bibitem{Be}
L.~Beran, Orthomodular Lattices. Algebraic Approach. Reidel, Dordrecht 1985. ISBN 90-277-1715-X.
\bibitem{Bi}
G.~Birkhoff, Lattice Theory. AMS, Providence, R.~I., 1979. ISBN 0-8218-1025-1.
\bibitem{BP}
W.~J.~Blok and Don~Pigozzi, Algebraizable logics. Mem.\ Amer.\ Math.\ Soc.\ {\bf77} (1989).
\bibitem{CC}
I.~Chajda and J.~Cirulis, An implicational logic for orthomodular lattices. Acta Sci.\ Math.\ (Szeged) {\bf82} (2016), 383--394.
\bibitem{CHL}
I.~Chajda, R.~Hala\v s and H.~L\"anger,	The logic induced by effect algebras.	Soft Computing (submitted).	http://arxiv.org/abs/2001.06686.
\bibitem{CLa}
I.~Chajda and H.~L\"anger, Implication in weakly and dually weakly orthomodular lattices.	Proc.\ 2018 Conf.\ Algebra Substructural Logics Take 6 (to appear).
\bibitem{CLb}
I.~Chajda and H.~L\"anger, How to introduce the connective implication in orthomodular posets. Asian-European J. Math.\ (submitted). http://arxiv.org/abs/1907.10539.
\bibitem K
G.~Kalmbach, Orthomodular Lattices, Academic Press, London 1983. ISBN 0-12-394580-1.
\bibitem R
H.~Rasiowa, An algebraic approach to non-classical logics. North-Holland, Amsterdam 1974.
\bibitem{RS}
H.~Rasiowa and R.~Sikorski, The mathematics of metamathematics. PWN, Warsaw 1970.
\end{thebibliography}
\end{document}